\definecolor{Sky}{rgb}{0.88,1,1}
\newtheorem{theorem}{Theorem}[section]
\newtheorem{lemma}[theorem]{Lemma}
\newtheorem{proposition}[theorem]{Proposition}
\theoremstyle{definition}
\newtheorem{definition}[theorem]{Definition}
\theoremstyle{remark}
\title{The Tutte Polynomial of Complex Reflection Groups}
\author{Hery Randriamaro
\thanks{This research was funded by my mother \\
Lot II B 32 bis Faravohitra, 101 Antananarivo, Madagascar \\
e-mail: \texttt{hery.randriamaro@gmail.com}}}
\begin{document}

\maketitle

\begin{abstract}
\noindent This article computes the Tutte polynomial of the hyperplane arrangements associated to the complex reflection groups. The calculations are based on both formulas of De Concini and Procesi for Tutte polynomial and the normaliser of parabolic subgroups in complex reflection groups determined by Krishnasamy and Taylor.

\bigskip 

\noindent \textsl{Keywords}: Tutte Polynomial, Hyperplane Arrangements, Complex Reflection Groups 

\smallskip

\noindent \textsl{MSC Number}: 20F36, 52C35, 90C90
\end{abstract}

\section{Introduction}

\noindent We work in the Hermitian space $\mathbb{C}^n$ endowed with an inner product $\langle \centerdot , \centerdot \rangle: \mathbb{C}^n \times \mathbb{C}^n \rightarrow \mathbb{R}$. Denote by $\mathbb{U}_m$ the set of all $m^{\text{th}}$ roots of unity, and by $\mathbb{U}$ the set $\displaystyle \bigcup_{m \in \mathbb{N}^*} \mathbb{U}_m$. For a nonzero vector $u \in \mathbb{C}^n$ and $\xi \in \mathbb{U}$, a complex reflection is a unitary transformation $r_{u,\xi}: \mathbb{C}^n \rightarrow \mathbb{C}^n$ defined by $\displaystyle r_{u,\xi}(x) := x - (1 - \xi) \frac{\langle x,u \rangle}{\langle u,u \rangle}u$. A complex reflection group or CRG is a finite subgroup of $\mathrm{GL}(\mathbb{C}^n)$ generated by complex reflections on $\mathbb{C}^n$. CRGs play a key role in the structure as well as in the representation theory of finite reductive groups, and give rise to braid groups and generalized Hecke algebras \cite[Chapter~2, 3]{Br}. Denote by $R_G$ the set formed by the complex reflections of a CRG $G$. The hyperplane arrangement associated to $G$ is
$$\mathcal{A}_G := \big\{\ker(1-r)\ \big|\ r \in R_G\big\}.$$
The group $G$ is said irreducible if $\mathcal{A}_G$ is irreducible. It is imprimitive if, for some $k>1$, $\mathbb{C}^n$ is a direct sum of nonzero subspaces $V_1, \dots, V_k$ such that the action of $G$ on $\mathbb{C}^n$ permutes $V_1, \dots, V_k$ among themselves, otherwise it is primitive. The irreducible CRGs were classified by Shephard and Todd \cite{ShTo}. The three infinite families of irreducible CRGs are the symmetric groups $\mathrm{Sym}(n)$, the imprimitive groups $G(m,p,n)$, and the cyclic groups $C_n$. In addition there are $34$ irreducible primitive groups of ranks $2, \dots, 8$ denoted by the symbols $G_4, G_5, \dots, G_{37}$. Each irreducible CRG has minimum sets of complex reflections generating it and subject to braid relations \cite[Appendix~A]{Br2}. A parabolic subgroup of a CRG $G$ in $\mathbb{C}^n$ is the pointwise stabilizer of a subspace of $\mathbb{C}^n$. Steinberg proved that a such subgroup is also a CRG \cite[Theorem~1.5]{St}. Furthermore, Krishnasamy and Taylor determined the normalisers of parabolic subgroups in an irreducible CRG \cite{KrTa}. 

\noindent Recall that the rank of a hyperplane arrangement $\mathcal{A}$ in $\mathbb{C}^n$ is $\displaystyle \mathrm{rk}\,\mathcal{A} := n - \dim \bigcap_{H \in \mathcal{A}}H$.

\begin{definition}
Let $G$ be a complex reflection group, and $x,y$ two variables. The Tutte polynomial $T_G(x,y)$ associated to $G$ is the Tutte polynomial of $\mathcal{A}_G$, that is $$T_G(x,y) := \sum_{\mathcal{B} \subseteq \mathcal{A}_G} (x-1)^{\mathrm{rk}\,\mathcal{A}_G - \mathrm{rk}\,\mathcal{B}} (y-1)^{\#\mathcal{B} - \mathrm{rk}\,\mathcal{B}}.$$
\end{definition}

\noindent A graph coloring corresponds to a way of coloring so that two connected vertices are differently colored. The chromatic polynomial is a graph polynomial which counts the number of graph colorings. In 1954, Tutte obtained a polynomial from which the chromatic polynomial of a graph, and that of its dual graph can be deduced \cite[§~3]{Tu}: it is originally the Tutte polynomial. That polynomial reveals more of the internal structure of the graph like its number of forests, of spanning subgraphs, and of acyclic orientations. But beyond graphs, it has many applications as stated by its instigator in one of his last works \cite{Tu2}: “Later I was astonished to hear that it had found applications in other branches of mathematics, even in knot theory.” For any positive integer $q$ for instance, along the hyperbola $(x-1)(y-1)=q$, the Tutte polynomial specializes to the partition function of the $q$-state Potts model \cite[§~I]{MeWe}. The Tutte polynomial is also defined on other combinatorial objects like matroids \cite{MeRaRo}. But this article is predominately interested in its definition on hyperplane arrangements. Orlik and Solomon proved that the Poincaré polynomial of the cohomology ring of $\displaystyle M_G = \mathbb{C}^n \setminus \bigcup_{H \in \mathcal{A}_G} H$ is given by \cite[Theorem~5]{OrSo} $\displaystyle \sum_{k \in \mathbb{N}} \mathrm{rank}\, H^k(M_G, \mathbb{Z})\, q^k = (-1)^{\mathrm{rk}\,\mathcal{A}_G} q^{n- \mathrm{rk}\,\mathcal{A}_G} T_G(1-q,0)$.

\noindent One trivial case is that of the cyclic group $C_n$ of rank $1$ for which $\mathcal{A}_{C_n} = \{0\}$, and then $T_{C_n}(x,y) = x$. For every CRG $G$, there exist some irreducible CRGs $G^{(1)}, \dots, G^{(m)}$ such that $G \simeq G^{(1)} \times \dots \times G^{(m)}$ \cite[Theorem~1.27]{LeTa}, and then $\displaystyle T_G(x,y) = \prod_{i \in [m]} T_{G^{(i)}}(x,y)$. Namely, the Tutte polynomial associated to a CRG can be computed from those of irreducible ones.

\noindent The story "Tutte Polynomial of Reflection Group" begins in 2007 when Ardila computed the Tutte polynomial of the hyperplane arrangements associated to the symmetric groups $\mathrm{Sym}(n)$, and to the imprimitive groups $G(2,1,n)$ and $G(2,2,n)$ \cite[Theorem~4.1--4.3]{Ar} by means of the finite field method. One year later, De Concini and Procesi obtained the same polynomials with a more direct method \cite[§~3.3]{DePr}, and computed also the Tutte polynomial associated to the primitive groups $G_{28}, G_{35}, G_{36}, G_{37}$ \cite[§~3.4]{DePr}. Independently, the PhD thesis of Geldon defended in 2009 consists on the computing of those three latter polynomials \cite{Ge}. Then in 2017, we computed the Tutte polynomial associated to the imprimitive groups $G(m,p,n)$ and $G(m,m,n)$ by means of an extended field method \cite[§~5]{Ra}.

\smallskip

\noindent Let $\mathrm{Cl}_G(X)$ and $\mathrm{N}_G(X)$ be the conjugacy class and normaliser respectively of a subset $X$ in a CRG $G$. Denote by $\mathscr{C}(G)$ be the set formed by the conjugacy classes of the parabolic subgroups of $G$. This article aims to compute the Tutte polynomial associated to the imprimitive CRGs with a more direct method using the proof strategy of De Concini and Procesi, and to close the chapter relating to the above mentioned story on the complex reflection groups by computing the Tutte polynomial associated to the primitive CRGs $G_4, \dots, G_{27}, G_{29}, \dots, G_{34}$ through the following theorems.

\begin{theorem} \label{ThImp}
Let $m,p \in \mathbb{N}^*$ such that $m \neq p$ and $p \mid m$. The exponential generating function of the Tutte polynomials associated to the imprimitive complex reflection groups $G(m,p,n)$ is
$$\displaystyle \sum_{n \in \mathbb{N}} \frac{T_{G(m,p,n)}(x,y)}{n!} t^n = \bigg(\sum_{n \in \mathbb{N}} \frac{y^{m\binom{n}{2}+n}}{(y-1)^nn!}t^n\bigg) \bigg(\sum_{n \in \mathbb{N}} \frac{m^n y^{\binom{n}{2}}}{(y-1)^n n!}t^n\bigg)^{\frac{(x-1)(y-1)-1}{m}}.$$
Besides, resetting $T_{G(m,m,n)}(x,y) = x-1$ by abuse of notation, that of the Tutte polynomials associated to the imprimitive complex reflection groups $G(m,m,n)$ is 
$$\sum_{n \in \mathbb{N}} \frac{T_{G(m,m,n)}(x,y)}{n!} t^n = \bigg(\sum_{n \in \mathbb{N}} \frac{y^{m\binom{n}{2}}}{(y-1)^nn!}t^n\bigg) \bigg(\sum_{n \in \mathbb{N}} \frac{m^n y^{\binom{n}{2}}}{(y-1)^n n!}t^n\bigg)^{\frac{(x-1)(y-1)-1}{m}}.$$
\end{theorem}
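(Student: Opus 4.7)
The plan is to apply the De Concini--Procesi expansion of $T_G(x,y)$ as a sum indexed by conjugacy classes of parabolic subgroups. For each class $[W'] \in \mathscr{C}(G)$, the contribution is $[G:\mathrm{N}_G(W')]$ times an inner sum over subsets of $\mathcal{A}_{W'}$ that span $\mathrm{Fix}(W')^{\perp}$, weighted by the appropriate powers of $(x-1)$ and $(y-1)$. The inner sum depends only on $W'$ and factors multiplicatively across its irreducible components, so the entire Tutte polynomial becomes a weighted sum over combinatorial types of parabolic subgroups.

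The second step is to classify the parabolic subgroups of $G(m,p,n)$ and to identify their normalisers via Krishnasamy--Taylor. Every parabolic subgroup of $G(m,1,n)$ is conjugate to one of the form $G(m,1,a) \times S_{\lambda_1} \times \cdots \times S_{\lambda_r}$, where the distinguished factor acts on $a$ coordinates and the symmetric factors act on the blocks of a composition of the remaining $n-a$ coordinates. For $G(m,p,n)$ the same block shape persists, with the distinguished factor adjusted to absorb the $p$-index constraint; for $G(m,m,n)$ that distinguished factor collapses, which motivates the convention $T_{G(m,m,n)}(x,y) = x-1$ adopted in the statement. Krishnasamy--Taylor realise $\mathrm{N}_G(W')/W'$ as a product of wreath products permuting like-sized blocks, turning $[G:\mathrm{N}_G(W')]$ into a concrete multinomial in $n$, $a$, and the $\lambda_i$.

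The third step is to repackage the resulting double sum as an exponential generating function. Since a parabolic subgroup decomposes multiplicatively over its blocks, the exponential formula applies directly: writing $B(t)$ for the EGF whose coefficients encode the contribution of a single symmetric-group (``free'') block and $A(t)$ for that of the distinguished block, one expects
\[
\sum_{n \geq 0} \frac{T_{G(m,p,n)}(x,y)}{n!}\, t^n \;=\; A(t) \cdot B(t)^{\alpha},
\]
where the exponent $\alpha$ is determined by the total $(x-1)$- and $(y-1)$-weight distributed across the free blocks. Direct hyperplane counts then yield the single-block values $m^k y^{\binom{k}{2}}/(y-1)^k$ for a free block of size $k$ and $y^{m\binom{k}{2}+k}/(y-1)^k$ for the distinguished block, matching the factors displayed in the theorem; the change in the distinguished factor for $G(m,m,n)$ reflects the absence of the coordinate hyperplanes $z_i = 0$.

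The main obstacle I anticipate is pinning down the exponent $\alpha = \bigl((x-1)(y-1)-1\bigr)/m$: extracting this specific shape requires a careful rank-and-cardinality bookkeeping, exploiting $p \mid m$ to count the colouring orbits surviving the $\mathrm{N}_G(W')/W'$-action, followed by a delicate manipulation recognising the resulting series as a fractional power of $B(t)$. Once established for $G(m,p,n)$, the $G(m,m,n)$ formula follows by specialising the same machinery with the distinguished block suppressed, and the two assertions of the theorem reduce to verifying the displayed generating functions coefficient by coefficient.
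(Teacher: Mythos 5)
Your skeleton matches the paper's: both expand $T_{G(m,p,n)}(x,y)$ over flats of $\mathcal{A}_{G(m,p,n)}$, identify flats with parabolic subgroups $G(m,p,k)\times\prod_i\mathrm{Sym}(i)^{h_i}$ via Krishnasamy--Taylor, and resum with the exponential formula to get a product of the form $A(t)\,B(t)^{\alpha}$. But the step you defer as ``the main obstacle'' --- pinning down $\alpha=\bigl((x-1)(y-1)-1\bigr)/m$ and the explicit factors --- is precisely where the actual work lies, and your sketch of it contains a genuine gap. You assert that ``direct hyperplane counts'' yield $m^k y^{\binom{k}{2}}/(y-1)^k$ for a free block and $y^{m\binom{k}{2}+k}/(y-1)^k$ for the distinguished block. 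That is not what these blocks contribute: the single-block weights entering the flat expansion are $T_{\mathrm{Sym}(i)}(1,y)$ and $T_{G(m,p,k)}(1,y)$, for which no direct closed-form hyperplane count is available, and the displayed series in the theorem are not these evaluations. With only the expansion of $T_G(x,y)$ over flats (the paper's Equation~\ref{EqDP1}) you are left with an EGF expressed in terms of the unknown series $\sum_k T_{G(m,p,k)}(1,y)\,t^k/(m^k k!)$ and $\exp\sum_i T_{\mathrm{Sym}(i)}(1,y)\,t^i/(m\,i!)$, and no way to close the loop.

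The missing idea is the \emph{second} De Concini--Procesi identity, $y^{\#\mathcal{A}}=\sum_{\mathcal{B}\in\mathrm{F}(\mathcal{A})}(y-1)^{\mathrm{rk}\,\mathcal{B}}\,T_{\mathcal{B}}(1,y)$ (Equation~\ref{EqDP2}). Applying it to the \emph{same} parabolic decomposition produces a functional equation whose left-hand side is the explicit series $\sum_n y^{m\binom{n}{2}+n}t^n/(m^n n!)$, and which can be solved for the unknown generating function of the $T_{G(m,p,k)}(1,y)$ as a quotient of two explicit series (Equations~\ref{EqBk} and~\ref{EqDk} in the paper); the same identity applied to the braid arrangement gives $\exp\sum_{i\ge 1}(y-1)^{i-1}T_{\mathrm{Sym}(i)}(1,y)\,t^i/i!=\sum_n y^{\binom{n}{2}}t^n/n!$. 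The exponent then assembles mechanically: the $-1/m$ comes from the denominator of that quotient, and the $(x-1)(y-1)/m$ comes from rewriting $\exp\sum_i T_{\mathrm{Sym}(i)}(1,y)t^i/(m\,i!)$ as the $(y-1)/m$ power of the known series after the substitution $t\mapsto t/(y-1)$. Without this second identity (or an equivalent device for evaluating $T_P(1,y)$ on the parabolic pieces), the ``delicate manipulation'' you invoke cannot be carried out, so the proposal as written does not yet constitute a proof.
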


\begin{theorem} \label{ThPri}
Let $G$ be a primitive complex reflection group. We obtain the Tutte polynomials associated to the groups $G_4, \dots, G_{22}$ of rank $2$ with the formula
\begin{equation}
T_G(x,y) = x^2 + (\#\mathcal{A}_G-2)x + \sum_{i \in [\#\mathcal{A}_G-1] \setminus \{1\}} (\#\mathcal{A}_G-i) y^{i-1}.  \label{Eq1}
\end{equation}
Then, we obtain the Tutte polynomials associated to the primitive groups $G_{23}, \dots, G_{27}, G_{29}, \dots, G_{34}$ of ranks $3,4,5,6$ with the recurrence relation 
\begin{equation}
T_{G}(x,y) = y^{\#\mathcal{A}_G} + \sum_{\mathrm{Cl}_G(P) \in \mathscr{C}(G) \setminus \mathrm{Cl}_G(G)} \big[G : \mathrm{N}_G(P)\big] \big((x-1)^{\mathrm{rk}\,\mathcal{A}_G - \mathrm{rk}\,\mathcal{A}_P} - (y-1)^{\mathrm{rk}\,\mathcal{A}_P}\big) T_{P}(1,y). \label{Eq2}
\end{equation}
\end{theorem}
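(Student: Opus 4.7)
My plan is to separate the two parts of the theorem and handle them with a common idea, namely indexing subsets of $\mathcal{A}_G$ by the parabolic subgroup they generate.

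For Eq.~(\ref{Eq1}), since $\mathrm{rk}\,\mathcal{A}_G=2$ and every hyperplane is a line through the origin, any two distinct hyperplanes intersect only in $\{0\}$, so $\mathrm{rk}\,\mathcal{B}=\min(|\mathcal{B}|,2)$ for every $\mathcal{B}\subseteq\mathcal{A}_G$. Writing $N=\#\mathcal{A}_G$ and splitting the defining sum of $T_G$ by $|\mathcal{B}|$ gives
$$T_G(x,y)=(x-1)^2+N(x-1)+\sum_{k=2}^{N}\binom{N}{k}(y-1)^{k-2}.$$
Starting from $y^N=\bigl(1+(y-1)\bigr)^N$, one extracts $\sum_{k=2}^{N}\binom{N}{k}(y-1)^{k-2}=(y^N-1-N(y-1))/(y-1)^2$; rewriting the numerator as $(y-1)\sum_{k=1}^{N-1}(1+y+\dots+y^{k-1})$ and collecting powers produces $\sum_{i=0}^{N-2}(N-1-i)y^i$. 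Combining this with $(x-1)^2+N(x-1)=x^2+(N-2)x+(1-N)$ cancels the constant $(N-1)$ against $(1-N)$ and yields Eq.~(\ref{Eq1}).

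For Eq.~(\ref{Eq2}), I would follow the De~Concini--Procesi strategy. To each $\mathcal{B}\subseteq\mathcal{A}_G$ I associate $V(\mathcal{B}):=\bigcap_{H\in\mathcal{B}}H$ and the pointwise stabiliser $P_\mathcal{B}\leq G$ of $V(\mathcal{B})$; by Steinberg's theorem $P_\mathcal{B}$ is a parabolic CRG with $\mathcal{A}_{P_\mathcal{B}}=\{H\in\mathcal{A}_G: V(\mathcal{B})\subseteq H\}$ and $\bigcap\mathcal{A}_{P_\mathcal{B}}=V(\mathcal{B})$, so $\mathrm{rk}\,\mathcal{B}=\mathrm{rk}\,\mathcal{A}_{P_\mathcal{B}}$. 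Partitioning the Tutte sum first by $P_\mathcal{B}$ and then by $G$-conjugacy class (each of size $[G:\mathrm{N}_G(P)]$ by orbit--stabiliser), and recognising the inner sum over subsets of $\mathcal{A}_P$ whose intersection equals $\bigcap\mathcal{A}_P$ as $T_P(1,y)$, gives the master identity
$$T_G(x,y)=\sum_{\mathrm{Cl}_G(P)\in\mathscr{C}(G)}\bigl[G:\mathrm{N}_G(P)\bigr]\,(x-1)^{\mathrm{rk}\,\mathcal{A}_G-\mathrm{rk}\,\mathcal{A}_P}\,T_P(1,y).$$
Applying the same partition to the trivial identity $\sum_{\mathcal{B}\subseteq\mathcal{A}_G}(y-1)^{|\mathcal{B}|}=y^{\#\mathcal{A}_G}$ produces the companion identity
$$y^{\#\mathcal{A}_G}=\sum_{\mathrm{Cl}_G(P)\in\mathscr{C}(G)}\bigl[G:\mathrm{N}_G(P)\bigr]\,(y-1)^{\mathrm{rk}\,\mathcal{A}_P}\,T_P(1,y).$$
Isolating the $\mathrm{Cl}_G(G)$ summand in each identity and eliminating the unknown $T_G(1,y)$ between them rewrites the master identity in the form of Eq.~(\ref{Eq2}), now expressing $T_G(x,y)$ purely in terms of the Tutte polynomials of strictly smaller parabolics.

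The main obstacle is computational rather than conceptual. Turning Eq.~(\ref{Eq2}) into explicit formulas for each of the primitive CRGs $G_{23},\dots,G_{27},G_{29},\dots,G_{34}$ requires, for every $G$, the full list of conjugacy classes of parabolic subgroups together with their normaliser indices $[G:\mathrm{N}_G(P)]$ (read off from Krishnasamy--Taylor), and the values $T_P(1,y)$ for all proper parabolics $P$. These are assembled recursively: Eq.~(\ref{Eq1}) handles the rank-$2$ parabolics, Theorem~\ref{ThImp} handles imprimitive ones, $T_{C_n}(x,y)=x$ handles cyclic factors, and the product rule $T_{G_1\times G_2}=T_{G_1}T_{G_2}$ handles parabolics that decompose as direct products of irreducibles. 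The real work is this bookkeeping, most delicate for the rank-$6$ group $G_{34}$.
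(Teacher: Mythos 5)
Your argument is correct. For Equation~(\ref{Eq2}) you take essentially the same route as the paper: the two De Concini--Procesi identities $T_G(x,y)=\sum_{\mathcal{B}\in\mathrm{F}(\mathcal{A}_G)}(x-1)^{\mathrm{rk}\,\mathcal{A}_G-\mathrm{rk}\,\mathcal{B}}\,T_{\mathcal{B}}(1,y)$ and $y^{\#\mathcal{A}_G}=\sum_{\mathcal{B}\in\mathrm{F}(\mathcal{A}_G)}(y-1)^{\mathrm{rk}\,\mathcal{B}}\,T_{\mathcal{B}}(1,y)$, the Steinberg correspondence between flats and parabolic subgroups, the regrouping of flats into $G$-conjugacy classes of size $[G:\mathrm{N}_G(P)]$, and the elimination of the unknown $T_G(1,y)$ between the two identities; the only difference is that you sketch a proof of the two identities themselves (partitioning subsets of $\mathcal{A}_G$ by the flat they span), where the paper simply quotes \cite[Proposition~2.36]{DePr2}. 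For Equation~(\ref{Eq1}) your route is genuinely different: you use that a rank-$2$ arrangement is a uniform matroid, i.e.\ $\mathrm{rk}\,\mathcal{B}=\min(\#\mathcal{B},2)$, expand the defining sum by cardinality, and resum $\sum_{k\geq 2}\binom{N}{k}(y-1)^{k-2}=\sum_{i=0}^{N-2}(N-1-i)y^{i}$, whereas the paper applies Crapo's internal/external activity formula to an ordered listing $H_1\prec\dots\prec H_m$ of the hyperplanes. Both computations are short and your constant-term cancellation checks out; yours is more self-contained (no ordering, no activities), while the paper's version exercises the same activity machinery it later implements in SageMath for the intermediate polynomials. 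As you note, the rest of the theorem is bookkeeping: the explicit polynomials for $G_{23},\dots,G_{27},G_{29},\dots,G_{34}$ require the Krishnasamy--Taylor tables of parabolic classes and normaliser indices, together with the base cases supplied by Equation~(\ref{Eq1}), Theorem~\ref{ThImp}, $T_{C_n}(x,y)=x$, and multiplicativity over direct products, exactly as carried out in Sections~\ref{SecRk3}--\ref{SecRk5}.
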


\noindent Remark that $y^{\#\mathcal{A}_G}$ appears in Equation~\ref{Eq2} even if $\#\mathcal{A}_G - \mathrm{rk}\,\mathcal{A}_G$ is the highest power of $y$ in $T_{G}(x,y)$. That appearance will be more understandable after having read the proof of Equation~\ref{Eq2}. Note that one can also compute the Tutte polynomials associated to $G_{28}, G_{35}, G_{36}, G_{37}$ by using Equation~\ref{Eq2}. Furthermore, although the Tutte polynomial is in principle calculable from its definition, in practice that may be very cumbersome. Already for the CRG $G_{30} = H_4$, it takes a wide amount of computer time and space. 

\smallskip

\noindent This article is organized as follows: We first consider the imprimitive CRGs by proving Theorem~\ref{ThImp} in Section~\ref{SecImp}. Then, we recall the formula of Crapo in Section~\ref{SecRk2}, and use it to prove Equation~\ref{Eq1} in order to obtain the Tutte polynomials associated to the primitive CRGs of rank $2$. We prove Equation~\ref{Eq2} in Section~\ref{SecRk3}, and use it to compute the Tutte polynomials associated to the primitive CRGs of rank $3$, of rank $4$ in Section~\ref{SecRk4}, and of ranks $5$ and $6$ in Section~\ref{SecRk5}. That last computing finishes the proof of Theorem~\ref{ThPri}. The calculations are implemented with the computer algebra system \href{http://www.sagemath.org/}{SageMath}.

\section{The Imprimitive Complex Reflection Groups} \label{SecImp}

\noindent We recall two formulas of De Concini and Procesi, and use them to compute the exponential generating functions of the Tutte polynomials associated to the imprimitive reflection groups.

\smallskip

\noindent Let $\mathcal{A}$ be a hyperplane arrangement in $\mathbb{C}^n$. The closure of a subset $\mathcal{B} \subseteq \mathcal{A}$ in $\mathcal{A}$ is $$\bar{\mathcal{B}} := \big\{H \in \mathcal{A}\ \big|\ \mathrm{rk}(\mathcal{B} \cup \{H\}) = \mathrm{rk}\,\mathcal{B}\big\}.$$
The subset $\mathcal{B}$ is a flat of $\mathcal{A}$ if $\bar{\mathcal{B}} = \mathcal{B}$. Denote by $\mathrm{F}(\mathcal{A})$ the set formed by the flats of $\mathcal{A}$. We need the following result due to De Concini and Procesi \cite[Proposition~2.36]{DePr2}.

\begin{proposition} \label{PrDePr}
	Let $\mathcal{A}$ be a hyperplane arrangement in $\mathbb{C}^n$. Then,
	\begin{align}
	T_{\mathcal{A}}(x,y) & = \sum_{\mathcal{B} \in \mathrm{F}(\mathcal{A})} (x-1)^{\mathrm{rk}\,\mathcal{A} - \mathrm{rk}\,\mathcal{B}}\, T_{\mathcal{B}}(1,y), \label{EqDP1} \\
	y^{\#\mathcal{A}} & = \sum_{\mathcal{B} \in \mathrm{F}(\mathcal{A})} (y-1)^{\mathrm{rk}\,\mathcal{B}}\, T_{\mathcal{B}}(1,y). \label{EqDP2}
	\end{align}
\end{proposition}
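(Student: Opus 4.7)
The plan is to derive both identities by partitioning the subsets $\mathcal{C} \subseteq \mathcal{A}$ appearing in the defining sums according to their closure $\bar{\mathcal{C}}$. The key preliminary observation is that for any subset $\mathcal{C} \subseteq \mathcal{A}$, the closure $\bar{\mathcal{C}}$ is a flat, it contains $\mathcal{C}$, and $\mathrm{rk}\,\bar{\mathcal{C}} = \mathrm{rk}\,\mathcal{C}$. Moreover, if $\mathcal{B}$ is a flat and $\mathcal{C} \subseteq \mathcal{B}$, then $\bar{\mathcal{C}} = \mathcal{B}$ if and only if $\mathrm{rk}\,\mathcal{C} = \mathrm{rk}\,\mathcal{B}$: the inclusion $\bar{\mathcal{C}} \subseteq \bar{\mathcal{B}} = \mathcal{B}$ is automatic, and equality forces the ranks to agree, while conversely if the ranks agree then every $H \in \mathcal{B}$ satisfies $\mathrm{rk}(\mathcal{C} \cup \{H\}) = \mathrm{rk}\,\mathcal{B} = \mathrm{rk}\,\mathcal{C}$.

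With this in hand I would first rewrite the specialization $T_{\mathcal{B}}(1,y)$. Since $(x-1)^k$ evaluated at $x=1$ kills every term of the defining sum for which $\mathrm{rk}\,\mathcal{C} < \mathrm{rk}\,\mathcal{B}$, the above characterization gives
\[
T_{\mathcal{B}}(1,y) = \sum_{\substack{\mathcal{C} \subseteq \mathcal{B} \\ \bar{\mathcal{C}} = \mathcal{B}}} (y-1)^{\#\mathcal{C} - \mathrm{rk}\,\mathcal{C}}.
\]
This is the building block for both identities.

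For Equation~\ref{EqDP1}, I would start from the definition of $T_{\mathcal{A}}(x,y)$ and group the subsets $\mathcal{C} \subseteq \mathcal{A}$ by their closure. Since $\mathrm{rk}\,\mathcal{C} = \mathrm{rk}\,\bar{\mathcal{C}}$, the factor $(x-1)^{\mathrm{rk}\,\mathcal{A} - \mathrm{rk}\,\mathcal{C}}$ depends only on $\bar{\mathcal{C}}$ and can be pulled outside the inner sum; what remains is precisely the expression for $T_{\bar{\mathcal{C}}}(1,y)$ above.

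For Equation~\ref{EqDP2} the starting point is the binomial identity
\[
y^{\#\mathcal{A}} = \bigl(1 + (y-1)\bigr)^{\#\mathcal{A}} = \sum_{\mathcal{C} \subseteq \mathcal{A}} (y-1)^{\#\mathcal{C}}.
\]
Grouping again by $\mathcal{B} = \bar{\mathcal{C}}$ and writing $(y-1)^{\#\mathcal{C}} = (y-1)^{\mathrm{rk}\,\mathcal{B}} (y-1)^{\#\mathcal{C} - \mathrm{rk}\,\mathcal{C}}$, the factor $(y-1)^{\mathrm{rk}\,\mathcal{B}}$ pulls outside the inner sum, which then collapses to $T_{\mathcal{B}}(1,y)$. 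I do not expect a serious obstacle: the only delicate point is the equivalence $\bar{\mathcal{C}} = \mathcal{B} \Leftrightarrow (\mathcal{C} \subseteq \mathcal{B}\text{ and }\mathrm{rk}\,\mathcal{C} = \mathrm{rk}\,\mathcal{B})$ for flats $\mathcal{B}$, which is what makes $T_{\mathcal{B}}(1,y)$ the correct bookkeeper for both identities.
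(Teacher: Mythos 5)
Your argument is correct and complete: fibering the subsets of $\mathcal{A}$ over their closures, together with the observation that at $x=1$ the polynomial $T_{\mathcal{B}}(1,y)$ collects exactly the subsets whose closure is the flat $\mathcal{B}$, yields both identities. The paper itself gives no proof of this proposition --- it is quoted from De Concini and Procesi --- and your closure-fibering argument is essentially the standard proof found in that reference, so there is nothing to add.
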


\noindent Denote by $\mathbf{P}(G)$ the set formed by the parabolic subgroups of a complex reflection group $G$.

\begin{lemma}  \label{LeSt}
	Let $G$ be a complex reflection group. Then, there is a one-to-one correspondence between the parabolic subgroups in $\mathbf{P}(G)$ and the flats in $\mathrm{F}(\mathcal{A}_G)$ so that, if $P \in \mathbf{P}(G)$, then its corresponding flat is $\mathcal{A}_P = \big\{\ker(1-r)\ \big|\ r \in R_P\big\}$.
\end{lemma}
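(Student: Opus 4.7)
The plan is to exhibit explicit inverse maps and verify they are inverse using Steinberg's theorem \cite[Theorem~1.5]{St}, which asserts that the pointwise stabilizer $G_V$ of any subspace $V \subseteq \mathbb{C}^n$ is generated by $\{r \in R_G : V \subseteq \ker(1-r)\}$. Define $\Phi : \mathbf{P}(G) \to \mathrm{F}(\mathcal{A}_G)$ by $\Phi(P) := \mathcal{A}_P$, and $\Psi : \mathrm{F}(\mathcal{A}_G) \to \mathbf{P}(G)$ by $\Psi(\mathcal{B}) := G_{V_\mathcal{B}}$ where $V_\mathcal{B} := \bigcap_{H \in \mathcal{B}} H$ (with the convention $V_\emptyset = \mathbb{C}^n$, which gives the trivial parabolic).

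Two preliminary observations will do most of the work. First, if $P = G_V$ is any parabolic, then Steinberg's theorem identifies $R_P$ with $\{r \in R_G : V \subseteq \ker(1-r)\}$, so that
$$\mathcal{A}_P = \{H \in \mathcal{A}_G : V \subseteq H\}.$$
Second, for any $\mathcal{B} \subseteq \mathcal{A}_G$ and $H \in \mathcal{A}_G$, the equality $\mathrm{rk}(\mathcal{B} \cup \{H\}) = \mathrm{rk}\,\mathcal{B}$ is equivalent to $V_\mathcal{B} \subseteq H$, because intersecting with $H$ preserves the codimension exactly when the current intersection already lies in $H$. In particular $\bar{\mathcal{B}} = \{H \in \mathcal{A}_G : V_\mathcal{B} \subseteq H\}$.

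With these in hand, I would verify three things in order. Well-definedness of $\Phi$: writing $P = G_V$, the first observation gives $\mathcal{A}_P = \{H : V \subseteq H\}$, and then any $H$ with $\mathrm{rk}(\mathcal{A}_P \cup \{H\}) = \mathrm{rk}\,\mathcal{A}_P$ must contain $V_{\mathcal{A}_P} \supseteq V$, hence lies in $\mathcal{A}_P$; so $\mathcal{A}_P$ is closed. Identity $\Phi \circ \Psi = \mathrm{id}$: for a flat $\mathcal{B}$, $\Phi(\Psi(\mathcal{B})) = \{H : V_\mathcal{B} \subseteq H\} = \bar{\mathcal{B}} = \mathcal{B}$. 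Identity $\Psi \circ \Phi = \mathrm{id}$: starting from $P = G_V$, set $W := V_{\mathcal{A}_P} \supseteq V$; then $G_W \subseteq G_V = P$ is trivial, and conversely every generating reflection of $P$ has hyperplane in $\mathcal{A}_P$ and therefore contains $W$, so those reflections lie in $G_W$, yielding $P \subseteq G_W$ and hence $\Psi(\Phi(P)) = G_W = P$.

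The only delicate point is the inclusion $P \subseteq G_W$ in the last step: it is not enough to know that the reflections in $P$ fix $W$; we need every element of $P$ to fix $W$ pointwise. This is precisely where Steinberg's theorem is indispensable, since it tells us that $P = G_V$ is generated by its reflections. Everything else reduces to the rank-versus-containment dictionary stated in the second preliminary observation, so I expect no further obstacle.
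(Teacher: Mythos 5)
Your proof is correct and takes essentially the same route as the paper's (much terser) argument: both rest on the dictionary, furnished by Steinberg's theorem, between parabolic subgroups and subspaces of $\mathbb{C}^n$, together with the observation that a flat of $\mathcal{A}_G$ is exactly the set of hyperplanes containing a fixed subspace. You merely make explicit the inverse maps and the precise point where Steinberg's theorem is needed (namely that $P=G_V$ is generated by its reflections, so that $P\subseteq G_W$), which the paper's one-line chain of equivalences leaves implicit.
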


\begin{proof}
	Assume that $\mathcal{A}_G$ is a hyperplane arrangement in $\mathbb{C}^n$. Then, $P$ is a parabolic subgroup of $G$ \: if and only if \: there exists a subspace $\displaystyle V \subseteq \mathbb{C}^n$ such that $\mathcal{A}_P = \{H \in \mathcal{A}_G\ |\ V \subseteq H\}$ and $\displaystyle \bigcap_{H \in \mathcal{A}_P} H = V$ \: if and only if \: $\mathcal{A}_P$ is a flat of $\mathcal{A}_G$.
\end{proof}

\noindent We can now proceed to the proof of Theorem~\ref{ThImp}:

\begin{proof}
A subgroup $G$ of $G(m,p,n)$ is parabolic if and only if a partition $1^{h_1}2^{h_2} \dots (n-k)^{h_{n-k}}$ with $\displaystyle \sum_{i \in [n-k]}ih_i = n-k$ exists such that $\displaystyle G \simeq G(m,p,k) \times \prod_{i \in [n-k]} \mathrm{Sym}(i)^{h_i}$ \cite[Theorem~3.6]{KrTa}. The number of parabolic subgroups of that type is
$\displaystyle \frac{m^{n-k-\sum_{i \in [n-k]}h_i} n!}{k! \prod_{i \in [n-k]}i!^{h_i}h_i!}$ \cite[Lemma~3.5]{KrTa}.

\noindent Let $\mathrm{a}_n(y) := T_{\mathrm{Sym}(n)}(1,y)$, $\mathrm{b}_n(y) := T_{G(m,p,n)}(1,y)$, $\mathrm{d}_n(y) := T_{G(m,m,n)}(1,y)$, and assume $\mathrm{a}_1(y) = \mathrm{b}_0(y) = \mathrm{d}_0(y) = 1$. As $\displaystyle \#\mathcal{A}_{G(m,p,n)} = m\binom{n}{2}+n$ and $\displaystyle \#\mathcal{A}_{G(m,m,n)} = m\binom{n}{2}$, using Equation~\ref{EqDP2} we obtain

\begin{align*}
& G(m,p,n): & \frac{y^{m\binom{n}{2}+n}}{m^nn!} = \sum_{\substack{k, h_1, \dots, h_{n-k} \in \mathbb{N} \\ \sum_{i \in [n-k]}ih_i = n-k}} (y-1)^{k+\sum_{i \in [n-k]}(i-1)h_i} \frac{\mathrm{b}_k(y)}{m^kk!} \prod_{i \in [n-k]}\frac{\mathrm{a}_i(y)^{h_i}}{(mi!)^{h_i}h_i!}, \\
& G(m,m,n): & \frac{y^{m\binom{n}{2}}}{m^nn!} = \sum_{\substack{k, h_1, \dots, h_{n-k} \in \mathbb{N} \\ k \neq 1 \\ \sum_{i \in [n-k]}ih_i = n-k}} (y-1)^{k+\sum_{i \in [n-k]}(i-1)h_i} \frac{\mathrm{d}_k(y)}{m^kk!} \prod_{i \in [n-k]}\frac{\mathrm{a}_i(y)^{h_i}}{(mi!)^{h_i}h_i!}.
\end{align*}

\noindent In term of generating functions, we have
\begin{align*}
\sum_{n \in \mathbb{N}} \frac{y^{m\binom{n}{2}+n}}{m^nn!}t^n & = \sum_{n \in \mathbb{N}} \sum_{\substack{k, h_1, \dots, h_{n-k} \in \mathbb{N} \\ \sum_{i \in [n-k]}ih_i = n-k}} \frac{(y-1)^k\mathrm{b}_k(y)t^k}{m^kk!} \prod_{i \in [n-k]}\frac{\big((y-1)^{i-1}\mathrm{a}_i(y)t^i\big)^{h_i}}{(mi!)^{h_i}h_i!} \\
& = \bigg(\sum_{k \in \mathbb{N}} \frac{\mathrm{b}_k(y)}{m^kk!} \big((y-1)t\big)^k \bigg) \, \exp \sum_{i \in \mathbb{N}^*} \frac{(y-1)^{i-1}\mathrm{a}_i(y)}{mi!} t^i \\
& = \bigg(\sum_{k \in \mathbb{N}} \frac{\mathrm{b}_k(y)}{m^kk!} \big((y-1)t\big)^k \bigg) \bigg(\exp \sum_{i \in \mathbb{N}^*} \frac{(y-1)^{i-1}\mathrm{a}_i(y)}{i!} t^i\bigg)^{\frac{1}{m}} \\
& = \bigg(\sum_{k \in \mathbb{N}} \frac{\mathrm{b}_k(y)}{m^kk!} \big((y-1)t\big)^k \bigg) \bigg(\sum_{n \in \mathbb{N}}\frac{y^{\binom{n}{2}}}{n!}t^n\bigg)^{\frac{1}{m}}\ \text{from \cite[Equation~2.24]{DePr2}},
\end{align*}
and also $\displaystyle \sum_{n \in \mathbb{N}} \frac{y^{m\binom{n}{2}}}{m^nn!}t^n = \bigg(\sum_{k \in \mathbb{N} \setminus \{1\}} \frac{\mathrm{d}_k(y)}{m^kk!} \big((y-1)t\big)^k \bigg) \bigg(\sum_{n \in \mathbb{N}}\frac{y^{\binom{n}{2}}}{n!}t^n\bigg)^{\frac{1}{m}}$. Then,
\begin{align}
\sum_{k \in \mathbb{N}} \frac{\mathrm{b}_k(y)}{m^kk!} \big((y-1)t\big)^k & = \bigg(\sum_{n \in \mathbb{N}} \frac{y^{m\binom{n}{2}+n}}{m^nn!}t^n\bigg) \bigg(\sum_{n \in \mathbb{N}}\frac{y^{\binom{n}{2}}}{n!}t^n\bigg)^{-\frac{1}{m}}, \label{EqBk} \\
\sum_{k \in \mathbb{N} \setminus \{1\}} \frac{\mathrm{d}_k(y)}{m^kk!} \big((y-1)t\big)^k & = \bigg(\sum_{n \in \mathbb{N}} \frac{y^{m\binom{n}{2}}}{m^nn!}t^n\bigg) \bigg(\sum_{n \in \mathbb{N}}\frac{y^{\binom{n}{2}}}{n!}t^n\bigg)^{-\frac{1}{m}}. \label{EqDk}
\end{align}

\noindent Now for the Tutte polynomials, using Equation~\ref{EqDP1} we have
\begin{align*}
& \frac{T_{G(m,p,n)}(x,y)}{m^n n!} = \sum_{\substack{k, h_1, \dots, h_{n-k} \in \mathbb{N} \\ \sum_{i \in [n-k]}ih_i = n-k}} (x-1)^{\sum_{i \in [n-k]} h_i} \frac{\mathrm{b}_k(y)}{m^kk!} \prod_{i \in [n-k]}\frac{\mathrm{a}_i(y)^{h_i}}{(mi!)^{h_i}h_i!}, \\
& \frac{T_{G(m,m,n)}(x,y)}{m^n n!} = \sum_{\substack{k, h_1, \dots, h_{n-k} \in \mathbb{N} \\ k \neq 1 \\ \sum_{i \in [n-k]}ih_i = n-k}} (x-1)^{\sum_{i \in [n-k]} h_i} \frac{\mathrm{d}_k(y)}{m^kk!} \prod_{i \in [n-k]}\frac{\mathrm{a}_i(y)^{h_i}}{(mi!)^{h_i}h_i!},
\end{align*}
where $T_{G(m,m,n)}(x,y) = x-1$ by abuse of notation. In term of generating functions, we get
\begin{align*}
\sum_{n \in \mathbb{N}} \frac{T_{G(m,p,n)}(x,y)}{m^n n!} t^n & = \sum_{n \in \mathbb{N}} \sum_{\substack{k, h_1, \dots, h_{n-k} \in \mathbb{N} \\ \sum_{i \in [n-k]}ih_i = n-k}} \frac{\mathrm{b}_k(y)t^k}{m^kk!} \prod_{i \in [n-k]}\frac{\big((x-1)\mathrm{a}_i(y)t^i\big)^{h_i}}{(mi!)^{h_i}h_i!} \\
& = \bigg(\sum_{k \in \mathbb{N}} \frac{\mathrm{b}_k(y)}{m^kk!} t^k\bigg) \, \exp \sum_{i \in \mathbb{N}^*} \frac{(x-1)\mathrm{a}_i(y)}{mi!} t^i \\
& = \bigg(\sum_{k \in \mathbb{N}} \frac{\mathrm{b}_k(y)}{m^kk!} t^k \bigg) \bigg(\exp \sum_{i \in \mathbb{N}^*} \frac{\mathrm{a}_i(y)}{i!} t^i\bigg)^{\frac{x-1}{m}}.
\end{align*}

\noindent Using Equation~\ref{EqBk}, we obtain $\displaystyle \sum_{k \in \mathbb{N}} \frac{\mathrm{b}_k(y)}{m^kk!} t^k = \bigg(\sum_{n \in \mathbb{N}} \frac{y^{m\binom{n}{2}+n}}{\big(m(y-1)\big)^nn!}t^n\bigg) \bigg(\sum_{n \in \mathbb{N}}\frac{y^{\binom{n}{2}}}{(y-1)^nn!}t^n\bigg)^{-\frac{1}{m}}$. Besides,
\begin{align*}
\exp \sum_{i \in \mathbb{N}^*} \frac{\mathrm{a}_i(y)}{i!} t^i & = \exp \sum_{n \in \mathbb{N}^*} \frac{(y-1)^n\mathrm{a}_n(y)}{n!} \Big(\frac{t}{y-1}\Big)^n \\
& = \bigg(\exp \sum_{n \in \mathbb{N}^*} \frac{(y-1)^{n-1}\mathrm{a}_n(y)}{n!} \Big(\frac{t}{y-1}\Big)^n\bigg)^{y-1} \\
& = \bigg(\sum_{n \in \mathbb{N}} \frac{y^{\binom{n}{2}}}{(y-1)^n n!}t^n\bigg)^{y-1}\ \text{from \cite[Equation~2.24]{DePr2}}
\end{align*}

\noindent Hence $\displaystyle \sum_{n \in \mathbb{N}} \frac{T_{G(m,p,n)}(x,y)}{m^n n!} t^n = \bigg(\sum_{n \in \mathbb{N}} \frac{y^{m\binom{n}{2}+n}}{\big(m(y-1)\big)^nn!}t^n\bigg) \bigg(\sum_{n \in \mathbb{N}} \frac{y^{\binom{n}{2}}}{(y-1)^n n!}t^n\bigg)^{\frac{(x-1)(y-1)-1}{m}}$.

\noindent Likewise, using Equation~\ref{EqDk} we obtain
\begin{align*}
\sum_{n \in \mathbb{N}} \frac{T_{G(m,m,n)}(x,y)}{m^n n!} t^n & = \sum_{n \in \mathbb{N}} \sum_{\substack{k, h_1, \dots, h_{n-k} \in \mathbb{N} \\ k \neq 1 \\ \sum_{i \in [n-k]}ih_i = n-k}} \frac{\mathrm{d}_k(y)t^k}{m^kk!} \prod_{i \in [n-k]}\frac{\big((x-1)\mathrm{a}_i(y)t^i\big)^{h_i}}{(mi!)^{h_i}h_i!} \\
& = \bigg(\sum_{k \in \mathbb{N} \setminus \{1\}} \frac{\mathrm{d}_k(y)}{m^kk!} t^k\bigg) \, \exp \sum_{i \in \mathbb{N}^*} \frac{(x-1)\mathrm{a}_i(y)}{mi!} t^i \\
& = \bigg(\sum_{k \in \mathbb{N} \setminus \{1\}} \frac{\mathrm{d}_k(y)}{m^kk!} t^k \bigg) \bigg(\sum_{n \in \mathbb{N}} \frac{y^{\binom{n}{2}}}{(y-1)^n n!}t^n\bigg)^{\frac{(x-1)(y-1)}{m}} \\
& = \bigg(\sum_{n \in \mathbb{N}} \frac{y^{m\binom{n}{2}}}{\big(m(y-1)\big)^nn!}t^n\bigg) \bigg(\sum_{n \in \mathbb{N}} \frac{y^{\binom{n}{2}}}{(y-1)^n n!}t^n\bigg)^{\frac{(x-1)(y-1)-1}{m}}. 
\end{align*}
\end{proof}

\section{The Primitive Complex Reflection Groups of Rank $2$} \label{SecRk2}

\noindent We first expose the formula of Crapo. The first reason is we use it to prove Equation~\ref{Eq1} of Theorem~\ref{ThPri}. The second is we implement it to compute intermediate Tutte polynomials like $T_{\mathrm{Sym}(5)}(x,y), T_{G(2,2,4)}(x,y), T_{G(3,3,4)}(x,y)$ to obtain $T_{K_5}(x,y)$ for example. It indeed has the advantage to reduce the implementation on $\displaystyle \binom{\#\mathcal{A}_G}{\mathrm{rk}\,\mathcal{A}_G}$ sets instead of $2^{\#\mathcal{A}_G}$. Then, we describe how to obtain the Tutte polynomials associated to the primitive CRGs of rank $2$.

\smallskip

\noindent Let $\mathcal{A}$ be a hyperplane arrangement in $\mathbb{C}^n$. A basis of $\mathcal{A}$ is a subset $\mathcal{B} \subseteq \mathcal{A}$ such that $$\#\mathcal{B} = \mathrm{rk}\,\mathcal{A} \quad \text{and} \quad \mathrm{rk}\,\mathcal{B} = \mathrm{rk}\,\mathcal{A}.$$

\noindent Denote by $\mathrm{B}(\mathcal{A})$ the set formed by the basis of $\mathcal{A}$. Moreover if $\mathcal{A}$ has a linear order $\lhd$, for $\mathcal{B} \subseteq \mathcal{A}$ and $H \in \mathcal{A}$, define the set $\mathcal{B}_{\lhd H} := \{K \in \mathcal{B}\ |\ K \lhd H\}$.

\smallskip

\noindent Let $\mathcal{A}$ be a hyperplane arrangement in $\mathbb{C}^n$ with a linear order $\lhd$, and $\mathcal{B} \in \mathrm{B}(\mathcal{A})$:
\begin{itemize}
\item Let $K \in \mathcal{B}$. One says that $K$ is an internal active element of $\mathcal{B}$ if $$\forall H \in \mathcal{A}_{\lhd K} \setminus \mathcal{B}:\, \mathrm{rk}\big(\{H\} \sqcup (\mathcal{B} \setminus \{K\})\big) < \mathrm{rk}\,\mathcal{A}.$$
\item Let $H \in \mathcal{A} \setminus \mathcal{B}$. One says that $H$ is an external active element of $\mathcal{B}$ if
$$\mathrm{rk}\big(\{H\} \sqcup \mathcal{B}_{\rhd H}\big) = \mathrm{rk}(\mathcal{B}_{\rhd H}).$$
\end{itemize}

\noindent Denote by $\mathrm{I}(\mathcal{B})$ resp. $\mathrm{E}(\mathcal{B})$ the set of internal resp. external active elements of a basis $\mathcal{B}$. We can now state the formula of Crapo \cite[Theorem~2.32]{DePr2}.

\begin{theorem}
Let $\mathcal{A}$ be a hyperplane arrangement in $\mathbb{C}^n$ with a linear order. Then, the Tutte polynomial of $\mathcal{A}$ is
$$T_{\mathcal{A}}(x,y) = \sum_{\mathcal{B} \in \mathrm{B}(\mathcal{A})} x^{\#\mathrm{I}(\mathcal{B})} y^{\#\mathrm{E}(\mathcal{B})}.$$
\end{theorem}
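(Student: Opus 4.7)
My plan is to prove Crapo's formula via the classical \emph{interval partition} of $2^{\mathcal{A}}$ indexed by bases. For each $B \in \mathrm{B}(\mathcal{A})$, introduce the Crapo block
$$[B] := \bigl\{(B \setminus A) \sqcup F : A \subseteq \mathrm{I}(B),\ F \subseteq \mathrm{E}(B)\bigr\}.$$
The theorem reduces to two assertions: (i) the blocks $\{[B]\}_{B \in \mathrm{B}(\mathcal{A})}$ partition $2^{\mathcal{A}}$; and (ii) for every $\mathcal{C} = (B \setminus A) \sqcup F \in [B]$, $\mathrm{rk}\,\mathcal{A} - \mathrm{rk}\,\mathcal{C} = |A|$ and $\#\mathcal{C} - \mathrm{rk}\,\mathcal{C} = |F|$. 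Granting both, the subset expansion of $T_{\mathcal{A}}$ regroups as
$$T_{\mathcal{A}}(x,y) = \sum_{B \in \mathrm{B}(\mathcal{A})}\Bigl(\sum_{A \subseteq \mathrm{I}(B)}(x-1)^{|A|}\Bigr)\Bigl(\sum_{F \subseteq \mathrm{E}(B)}(y-1)^{|F|}\Bigr) = \sum_{B \in \mathrm{B}(\mathcal{A})} x^{\#\mathrm{I}(B)}\, y^{\#\mathrm{E}(B)}.$$

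The engine for both (i) and (ii) is a reformulation of activity via fundamental circuits and cocircuits. For $H \in \mathcal{A} \setminus B$, let $C(H,B)$ be the unique circuit contained in $B \cup \{H\}$; then $H \in \mathrm{E}(B)$ iff $H$ is the $\lhd$-smallest element of $C(H,B)$, which is exactly the condition $C(H,B) \setminus \{H\} \subseteq B_{\rhd H}$. Dually, the fundamental cocircuit $D(K,B) := \{K\} \cup \{H \in \mathcal{A} \setminus B : K \in C(H,B)\}$ of $K \in B$ satisfies: $K \in \mathrm{I}(B)$ iff $K$ is the $\lhd$-smallest element of $D(K,B)$. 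From these I would deduce the \emph{non-interference lemma}: if $K \in \mathrm{I}(B)$ and $H \in \mathrm{E}(B)$ then $K \notin C(H,B) \setminus \{H\}$ — for otherwise $K \in C(H,B)$ places $H$ in $D(K,B) \setminus \{K\}$ forcing $H \rhd K$, while $K \in C(H,B) \setminus \{H\} \subseteq B_{\rhd H}$ forces $K \rhd H$, a contradiction.

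Claim (ii) then follows immediately: for each $H \in F \subseteq \mathrm{E}(B)$ the non-interference lemma gives $C(H,B) \setminus \{H\} \subseteq B \setminus A$, so $H \in \mathrm{span}(B \setminus A)$; hence $\mathrm{rk}\,\mathcal{C} = \mathrm{rk}(B \setminus A) = \#B - |A| = \mathrm{rk}\,\mathcal{A} - |A|$, and $\#\mathcal{C} - \mathrm{rk}\,\mathcal{C} = |F|$. For claim (i), I would produce a canonical basis $B(\mathcal{C})$ by a deterministic greedy sweep through $\mathcal{A}$ in $\lhd$-increasing order, where the decision to include each $H$ is dictated by $\mathcal{C}$ and the dependency pattern of the already-processed elements, and verify that $\mathcal{C} \bigtriangleup B(\mathcal{C}) \subseteq \mathrm{I}(B(\mathcal{C})) \sqcup \mathrm{E}(B(\mathcal{C}))$, showing $\mathcal{C} \in [B(\mathcal{C})]$. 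Uniqueness of the block containing $\mathcal{C}$ then reduces to showing that for any $B$ with $\mathcal{C} \in [B]$ the greedy sweep must return $B$, which is a straightforward induction on the sweep leveraging the circuit/cocircuit minimality of activity.

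The main obstacle is the precise specification of the greedy algorithm and the verification that its output matches the activity-based description of $\mathcal{C} \bigtriangleup B(\mathcal{C})$; once this is done, the rank identity and the regrouping of the Tutte polynomial are mechanical.
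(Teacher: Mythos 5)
First, a point of comparison: the paper does not prove this theorem at all — it imports it from \cite[Theorem~2.32]{DePr2} — so there is no in-paper argument to measure you against, and your proposal must stand on its own. Your blueprint is the classical one (Crapo's partition of the Boolean lattice $2^{\mathcal{A}}$ into the intervals $[\mathcal{B}\setminus\mathrm{I}(\mathcal{B}),\,\mathcal{B}\cup\mathrm{E}(\mathcal{B})]$ indexed by bases), and the parts you actually execute are correct. The translation of the paper's definitions into circuit/cocircuit minimality is right: $\mathrm{rk}\big(\{H\}\sqcup\mathcal{B}_{\rhd H}\big)=\mathrm{rk}(\mathcal{B}_{\rhd H})$ says exactly that $H$ lies in the span of $\mathcal{B}_{\rhd H}$, which for a basis $\mathcal{B}$ is equivalent to $C(H,\mathcal{B})\setminus\{H\}\subseteq\mathcal{B}_{\rhd H}$; and $\mathrm{rk}\big(\{H\}\sqcup(\mathcal{B}\setminus\{K\})\big)=\mathrm{rk}\,\mathcal{A}$ is equivalent to $K\in C(H,\mathcal{B})$. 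The non-interference lemma, claim (ii), and the regrouping $\sum_{A\subseteq\mathrm{I}(\mathcal{B})}(x-1)^{\#A}=x^{\#\mathrm{I}(\mathcal{B})}$ are all sound.

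The genuine gap is claim (i), and it is not a minor one: the statement that the blocks $[\mathcal{B}]$ partition $2^{\mathcal{A}}$ is where essentially all of the content of Crapo's theorem lives, and you leave it as a promissory note. You specify neither the greedy sweep producing $B(\mathcal{C})$ nor either of the two things you must verify about it: that $\mathcal{C}\bigtriangleup B(\mathcal{C})\subseteq\mathrm{I}(B(\mathcal{C}))\sqcup\mathrm{E}(B(\mathcal{C}))$ (every subset lies in some block) and that no second basis $\mathcal{B}'$ satisfies $\mathcal{C}\in[\mathcal{B}']$ (blocks are disjoint). Neither is routine; the ``dependency pattern of the already-processed elements'' that is supposed to drive the sweep is precisely the thing that has to be pinned down, and the uniqueness step needs a genuine exchange argument. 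As written, you have proved only the implication ``if the intervals partition $2^{\mathcal{A}}$, then Crapo's formula holds.'' If you want an argument that closes with less bookkeeping, prove the formula by induction on $\#\mathcal{A}$ via deletion and contraction of the $\lhd$-largest hyperplane $H$ (distinguishing whether $H$ is a coloop; there are no loops here since every hyperplane has rank $1$), checking how bases and their activities of $\mathcal{A}$ split between $\mathcal{A}\setminus\{H\}$ and the contraction; that route matches the subset-expansion recursion term by term and is closer to how the cited source and most textbook treatments establish the result.
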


\noindent Now, let $G$ be a CRG of rank $2$, and define the linear order $\prec$ on $\mathcal{A}_G = \{H_i\}_{i \in [m]}$, for $H_i, H_j \in \mathcal{A}_G$, by: $H_i \prec H_j \Longleftrightarrow i < j$. Hence,
\begin{itemize}
\item $\mathrm{I}\big(\{H_1, H_2\}\big) = \{H_1, H_2\}$ and $\mathrm{E}\big(\{H_1, H_2\}\big) = \emptyset$,
\item if $2 < j \leq m$, then $\mathrm{I}\big(\{H_1, H_j\}\big) = \{H_1\}$ and $\mathrm{E}\big(\{H_1, H_j\}\big) = \emptyset$,
\item if $1 < i < j \leq m$, then $\mathrm{I}\big(\{H_i, H_j\}\big) = \emptyset$ and $\mathrm{E}\big(\{H_i, H_j\}\big) = \big\{H_k\ \big|\ k \in [i-1]\big\}$.
\end{itemize}
Therefore, 
\begin{align*}
T_G(x,y) & = \sum_{i \in [m-1]} \sum_{j \in [m] \setminus [i]} x^{\#\mathrm{I}(\{H_i, H_j\})} y^{\#\mathrm{E}(\{H_i, H_j\})} \\
& = x^2 + \sum_{j \in [m] \setminus \{1,2\}} x + \sum_{i \in [m-1] \setminus \{1\}} \sum_{j \in [m] \setminus [i]} y^{i-1} \\
& = x^2 + (m-2)x + \sum_{i \in [m-1] \setminus \{1\}} (m-i) y^{i-1} \\
& = x^2 + (\#\mathcal{A}_G-2)x + \sum_{i \in [\#\mathcal{A}_G-1] \setminus \{1\}} (\#\mathcal{A}_G-i) y^{i-1}.
\end{align*}

\noindent We obtain the Tutte polynomials associated to the CRGs $G_4, G_5, \dots, G_{22}$ by replacing $\#\mathcal{A}_G$ to the corresponding cardinalities $\#\mathcal{A}_{G_k}$ listed in Table \ref{Rk2}. Those cardinalities were obtained from \cite[Table~3]{Co}. $T$ denotes the binary tetrahedral group of order $24$, $O$ the binary octahedral group of order $48$, and $I$ the binary icosahedral group of order $120$. Besides, the symbol $A \circ B$ denotes the central product of subgroups $A$ and $B$.

\begin{table}
\begin{center}
\begin{tabular}{|>{\columncolor{Sky}} c | c | c ||>{\columncolor{Sky}} c | c | c ||>{\columncolor{Sky}} c | c | c |}
  \hline			
  $k$ & $G_k$ & $\#\mathcal{A}_{G_k}$ & $k$ & $G_k$ & $\#\mathcal{A}_{G_k}$ & $k$ & $G_k$ & $\#\mathcal{A}_{G_k}$ \\
  \hline
  4 & $SL_2(\mathbb{F}_3)$ & $4$ & 11 & $C_3 \times (C_8 \circ O)$ & $46$ & 18 & $C_{15} \times I$ & $32$ \\
  5 & $C_3 \times T$ & $8$ & 12 & $GL_2(\mathbb{F}_3)$ & $12$ & 19 & $C_{15} \times (C_4 \circ I)$ & $62$ \\
  6 & $C_4 \circ SL_2(\mathbb{F}_3)$ & $10$ & 13 & $C_4 \circ O$ & $18$ & 20 & $C_3 \times I$ & $20$ \\
  7 & $C_3 \times (C_4 \circ T)$ & $14$ & 14 & $C_3 \circ GL_2(\mathbb{F}_3)$ & $20$ & 21 & $C_3 \times (C_4 \circ I)$ & $50$ \\
  8 & $T\,C_4$ & $18$ & 15 & $C_3 \times (C_4 \circ O)$ & $26$ & 22 & $C_4 \times I$ & $30$ \\
  9 & $C_8 \circ O$ & $30$ & 16 & $C_5 \times I$ & $12$ &  &  &  \\
  10 & $C_3 \times T\,C_4$ & $34$ & 17 & $C_5 \times (C_4 \circ I)$ & $42$ &  &  &  \\
  \hline  
\end{tabular}
\end{center}
\caption{The Irreducible Complex Reflection Groups of Rank $2$} \label{Rk2}
\end{table}

\section{The Primitive Complex Reflection Groups of Rank $3$}  \label{SecRk3}

\noindent We first prove Equation~\ref{Eq2} of Theorem~\ref{ThPri}. Then, we use it to compute the Tutte polynomials associated to the primitive CRGs of rank $3$. Recall that we also use it to compute those associated to the primitive CRGs of higher rank in Section~\ref{SecRk4} and Section~\ref{SecRk5}.

\begin{proof}
Using Proposition~\ref{PrDePr}, we get
\begin{align*}
T_G(x,y) & = \sum_{\mathcal{A} \in \mathrm{F}(\mathcal{A}_G)} (x-1)^{\mathrm{rk}\,\mathcal{A}_G - \mathrm{rk}\,\mathcal{A}}\, T_{\mathcal{A}}(1,y) \\
& = \sum_{\mathcal{A} \in \mathrm{F}(\mathcal{A}_G) \setminus {\mathcal{A}_G}} (x-1)^{\mathrm{rk}\,\mathcal{A}_G - \mathrm{rk}\,\mathcal{A}}\, T_{\mathcal{A}}(1,y) + T_G(1,y) \\
& = \sum_{\mathcal{A} \in \mathrm{F}(\mathcal{A}_G) \setminus {\mathcal{A}_G}} (x-1)^{\mathrm{rk}\,\mathcal{A}_G - \mathrm{rk}\,\mathcal{A}}\, T_{\mathcal{A}}(1,y) + y^{\#\mathcal{A}_G} - \sum_{\mathcal{A} \in \mathrm{F}(\mathcal{A}_G) \setminus {\mathcal{A}_G}} (y-1)^{\mathrm{rk}\,\mathcal{A}}\, T_{\mathcal{A}}(1,y) \\
& = y^{\#\mathcal{A}_G} + \sum_{\mathcal{A} \in \mathrm{F}(\mathcal{A}_G) \setminus {\mathcal{A}_G}} \big((x-1)^{\mathrm{rk}\,\mathcal{A}_G - \mathrm{rk}\,\mathcal{A}} - (y-1)^{\mathrm{rk}\,\mathcal{A}}\big) T_{\mathcal{A}}(1,y).
\end{align*}
From Lemma~\ref{LeSt}, we get 
\begin{align*}
T_G(x,y) & = y^{\#\mathcal{A}_G} + \sum_{P \in \mathbf{P}(G) \setminus {G}} \big((x-1)^{\mathrm{rk}\,\mathcal{A}_G - \mathrm{rk}\,\mathcal{A}_P} - (y-1)^{\mathrm{rk}\,\mathcal{A}_P}\big) T_{\mathcal{A}_P}(1,y) \\
& = y^{\#\mathcal{A}_G} + \sum_{\mathrm{Cl}_G(P) \in \mathscr{C}(G) \setminus \mathrm{Cl}_G(G)} \#\mathrm{Cl}_G(P) \big((x-1)^{\mathrm{rk}\,\mathcal{A}_G - \mathrm{rk}\,\mathcal{A}_P} - (y-1)^{\mathrm{rk}\,\mathcal{A}_P}\big) T_{P}(1,y).
\end{align*}
It is known that for a set $X \subseteq G$, we have $\#\mathrm{Cl}_G(X) = \big[G : \mathrm{N}_G(X)\big]$. Hence,
$$T_G(x,y) = y^{\#\mathcal{A}_G} + \sum_{\mathrm{Cl}_G(P) \in \mathscr{C}(G) \setminus \mathrm{Cl}_G(G)} \big[G : \mathrm{N}_G(P)\big] \big((x-1)^{\mathrm{rk}\,\mathcal{A}_G - \mathrm{rk}\,\mathcal{A}_P} - (y-1)^{\mathrm{rk}\,\mathcal{A}_P}\big) T_{P}(1,y).$$
\end{proof}

\noindent We can now compute the Tutte polynomials associated to the primitive CRGs $H_3$, $J_3^{(4)}$, $L_3$, $M_3$, $J_3^{(5)}$. The calculations are done using the cardinalities of conjugacy classes in Table~\ref{Rk3} which is established by means of $\#\mathrm{Cl}_G(X) = \big[G : \mathrm{N}_G(X)\big]$ and \cite[Table~4]{KrTa}.

\begin{table}
\begin{center}
\begin{tabular}{| c |>{\columncolor{Sky}} c | c |}
  \hline			
   & $P$ & $\#\mathrm{Cl}_{H_3}(P)$  \\
  \hline
  $G_{23} = H_3$ & $\mathrm{Sym}(2)$ & $15$  \\
   & $2\mathrm{Sym}(2)$ & $15$  \\
   & $\mathrm{Sym}(3)$ & $10$  \\
   & $G(5,5,2)$ & $6$  \\
  \hline			
   &  & $\#\mathrm{Cl}_{J_3^{(4)}}(P)$  \\
  \hline
  $G_{24} = J_3^{(4)}$ & $\mathrm{Sym}(2)$ & $21$  \\
   & $\mathrm{Sym}(3)$ & $28$  \\
   & $G(2,1,2)$ & $21$  \\
  \hline 
   &  & $\#\mathrm{Cl}_{L_3}(P)$  \\
  \hline
  $G_{25} = L_3$ & $C_3$ & $12$  \\
   & $2C_3$ & $12$   \\
   & $SL_2(\mathbb{F}_3)$ & $9$  \\
  \hline
   &  & $\#\mathrm{Cl}_{M_3}(P)$  \\
  \hline
  $G_{26} = M_3$ & $\mathrm{Sym}(2)$ & $9$  \\
   & $C_3$ & $12$  \\
   & $\mathrm{Sym}(2) + C_3$ & $36$  \\
   & $SL_2(\mathbb{F}_3)$ & $9$  \\
   & $G(3,1,2)$ & $12$   \\
  \hline
   &  & $\#\mathrm{Cl}_{J_3^{(5)}}(P)$  \\
  \hline
  $G_{27} = J_3^{(5)}$ & $\mathrm{Sym}(2)$ & $45$  \\
   & $\mathrm{Sym}(3)$ & $60$   \\
   & $\mathrm{Sym}(3)'$ & $60$   \\
   & $G(5,5,2)$ & $36$   \\
   & $G(2,1,2)$ & $45$    \\
  \hline  
\end{tabular}
\end{center}
\caption{The Conjugacy Classes of the Parabolic Subgroups of $G_{23}, \dots, G_{27}$} \label{Rk3}
\end{table}

\begin{align*}
T_{H_3}(x,y) =\ & y^{12} + 3y^{11} + 6y^{10} + 10y^9 + 15y^8 + 21y^7 + 28y^6 + 36y^5 + 6xy^3 + 45y^4 + x^3 + 12xy^2 \\ & + 49y^3 + 12x^2 + 28xy + 48y^2 + 32x + 32y. 
\end{align*}

\begin{align*}
T_{J_3^{(4)}}(x,y) =\ & y^{18} + 3y^{17} + 6y^{16} + 10y^{15} + 15y^{14} + 21y^{13} + 28y^{12} + 36y^{11} + 45y^{10} + 55y^9 + 66y^8 \\
& + 78y^7 + 91y^6 + 105y^5 + 120y^4 + x^3 + 21xy^2 + 136y^3 + 18x^2 + 70xy + 132y^2 \\ & + 80x + 80y.
\end{align*}

\begin{align*}
T_{L_3}(x,y) =\ & y^9 + 3y^8 + 6y^7 + 10y^6 + 15y^5 + 21y^4 + x^3 + 9xy^2 + 28y^3 + 9x^2 + 18xy + 27y^2 \\
& + 18x + 18y.
\end{align*}

\begin{align*}
T_{M_3}(x,y) =\ & y^{18} + 3y^{17} + 6y^{16} + 10y^{15} + 15y^{14} + 21y^{13} + 28y^{12} + 36y^{11} + 45y^{10} + 55y^9 + 66y^8 \\
& + 78y^7 + 91y^6 + 105y^5 + 12xy^3 + 120y^4 + x^3 + 33xy^2 + 124y^3 + 18x^2 + 54xy \\
& + 108y^2 + 72x + 72y.
\end{align*}

\begin{align*}
T_{J_3^{(5)}}(x,y) =\ & y^{42} + 3y^{41} + 6y^{40} + 10y^{39} + 15y^{38} + 21y^{37} + 28y^{36} + 36y^{35} + 45y^{34} + 55y^{33} + 66y^{32} \\
& + 78y^{31} + 91y^{30} + 105y^{29} + 120y^{28} + 136y^{27} + 153y^{26} + 171y^{25} + 190y^{24} + 210y^{23} \\ 
& + 231y^{22} + 253y^{21} + 276y^{20} + 300y^{19} + 325y^{18} + 351y^{17} + 378y^{16} + 406y^{15} + 435y^{14} \\
& + 465y^{13} + 496y^{12} + 528y^{11} + 561y^{10} + 595y^9 + 630y^8 + 666y^7 + 703y^6  + 741y^5 \\
& + 36xy^3 + 780y^4 + x^3 + 117xy^2 + 784y^3 + 42x^2 + 318xy + 708y^2 + 432x + 432y.
\end{align*}

\section{The Primitive Complex Reflection Groups of Rank $4$}   \label{SecRk4}

\noindent In this section are exposed the Tutte polynomials associated to the primitive CRGs $N_4$, $H_4$, $O_4$, $L_4$. The calculations are done using the cardinalities of conjugacy classes in Table~\ref{N4} -- \ref{L4}, established by means of $\#\mathrm{Cl}_G(X) = \big[G : \mathrm{N}_G(X)\big]$ and \cite[Table~6 -- 9]{KrTa}.

\begin{table}
\begin{center}
\begin{tabular}{|>{\columncolor{Sky}} c | c || >{\columncolor{Sky}} c | c |}
  \hline			
  $P$ & $\#\mathrm{Cl}_{N_4}(P)$ & $P$ & $\#\mathrm{Cl}_{N_4}(P)$  \\
  \hline
  $\mathrm{Sym}(2)$ & $40$ & $\mathrm{Sym}(4)$ & $80$   \\
  $2\mathrm{Sym}(2)$ & $120$ & $\mathrm{Sym}(4)'$ & $80$    \\
  $\mathrm{Sym}(3)$ & $160$ & $G(4,4,3)$ & $20$    \\
  $G(2,1,2)$ & $30$ & $G(2,1,3)$ & $40$    \\
  $\mathrm{Sym}(2) + \mathrm{Sym}(3)$ & $160$ &   &    \\
  \hline  
\end{tabular}
\end{center}
\caption{The Conjugacy Classes of the Parabolic Subgroups of $G_{29}$} \label{N4}
\bigskip
\bigskip
\end{table}

\begin{align*}
T_{N_4}(x,y) =\ & y^{36} + 4y^{35} + 10y^{34} + 20y^{33} + 35y^{32} + 56y^{31} + 84y^{30} + 120y^{29} + 165y^{28} + 220y^{27} \\
& + 286y^{26} + 364y^{25} + 455y^{24} + 560y^{23} + 680y^{22} + 816y^{21} + 969y^{20} + 1140y^{19} \\
& + 1330y^{18} + 1540y^{17} + 1771y^{16} + 2024y^{15} + 2300y^{14} + 2600y^{13} + 2925y^{12} \\
& + 3276y^{11} + 20xy^9 + 3654y^{10} + 60xy^8 + 4040y^9 + 120xy^7 + 4415y^8 + 240xy^6 + 4760y^7 \\
& + 420xy^5 + 5016y^6 + 660xy^4 + 5124y^5 + x^4 + 30x^2y^2 + 1120xy^3 + 5025y^4 + 36x^3 \\
& + 220x^2y + 1740xy^2 + 4500y^3 + 416x^2 + 2240xy + 3360y^2 + 1536x + 1536y.
\end{align*}

\begin{table}
\begin{center}
\begin{tabular}{|>{\columncolor{Sky}} c | c || >{\columncolor{Sky}} c | c |}
  \hline			
  $P$ & $\#\mathrm{Cl}_{H_4}(P)$ & $P$ & $\#\mathrm{Cl}_{H_4}(P)$  \\
  \hline
  $\mathrm{Sym}(2)$ & $60$ & $\mathrm{Sym}(2) + \mathrm{Sym}(3)$ & $600$   \\
  $2\mathrm{Sym}(2)$ & $450$ & $\mathrm{Sym}(2) + G(5,5,2)$ & $360$    \\
  $\mathrm{Sym}(3)$ & $200$ & $\mathrm{Sym}(4)$ & $300$    \\
  $G(5,5,2)$ & $72$ & $H_3$ & $60$    \\
  \hline  
\end{tabular}
\end{center}
\caption{The Conjugacy Classes of the Parabolic Subgroups of $G_{30}$} \label{H4}
\bigskip
\bigskip
\end{table}

\begin{align*}
T_{H_4}(x,y) =\ & y^{56} + 4y^{55} + 10y^{54} + 20y^{53} + 35y^{52} + 56y^{51} + 84y^{50} + 120y^{49} + 165y^{48} + 220y^{47} \\ 
& + 286y^{46} + 364y^{45} + 455y^{44} + 560y^{43} + 680y^{42} + 816y^{41} + 969y^{40} + 1140y^{39} \\
& + 1330y^{38} + 1540y^{37} + 1771y^{36} + 2024y^{35} + 2300y^{34} + 2600y^{33} + 2925y^{32} + 3276y^{31} \\ & + 3654y^{30} + 4060y^{29} + 4495y^{28} + 4960y^{27} + 5456y^{26} + 5984y^{25} + 6545y^{24} \\
& + 7140y^{23} + 7770y^{22} + 8436y^{21} + 9139y^{20} + 9880y^{19} + 10660y^{18} + 11480y^{17} \\
& + 12341y^{16} + 13244y^{15} + 14190y^{14} + 60xy^{12} + 15180y^{13} + 180xy^{11} + 16155y^{12} \\
& + 360xy^{10} + 17056y^{11} + 600xy^9 + 17824y^{10} + 900xy^8 + 18400y^9 + 1260xy^7 \\
& + 18725y^8 + 1680xy^6 + 18740y^7 + 2160xy^5 + 18386y^6 + 72x^2y^3 + 2700xy^4 + 17604y^5 \\
& + x^4 + 144x^2y^2 + 3816xy^3 + 16335y^4 + 56x^3 + 416x^2y + 4932xy^2 + 13932y^3 + 964x^2 \\
& + 6248xy + 10324y^2 + 5040x + 5040y.
\end{align*}

\begin{table}
\begin{center}
\begin{tabular}{|>{\columncolor{Sky}} c | c || >{\columncolor{Sky}} c | c |}
  \hline			
  $P$ & $\#\mathrm{Cl}_{O_4}(P)$ & $P$ & $\#\mathrm{Cl}_{O_4}(P)$  \\
  \hline
  $\mathrm{Sym}(2)$ & $60$ & $\mathrm{Sym}(2) + \mathrm{Sym}(3)$ & $960$   \\
  $2\mathrm{Sym}(2)$ & $360$ &  $\mathrm{Sym}(4)$  & $480$     \\
  $\mathrm{Sym}(3)$ & $320$ &  $G(4,2,3)$  & $60$    \\
  $G(4,2,2)$ & $30$ &    &     \\
  \hline  
\end{tabular}
\end{center}
\caption{The Conjugacy Classes of the Parabolic Subgroups of $G_{31}$} \label{O4}
\end{table}

\begin{align*}
T_{O_4}(x,y) =\ & y^{56} + 4y^{55} + 10y^{54} + 20y^{53} + 35y^{52} + 56y^{51} + 84y^{50} + 120y^{49} + 165y^{48} + 220y^{47} \\   & + 286y^{46} + 364y^{45} + 455y^{44} + 560y^{43} + 680y^{42} + 816y^{41} + 969y^{40} + 1140y^{39} + 1330y^{38} \\
& + 1540y^{37} + 1771y^{36} + 2024y^{35} + 2300y^{34} + 2600y^{33} + 2925y^{32} + 3276y^{31} + 3654y^{30} \\ & + 4060y^{29} + 4495y^{28} + 4960y^{27} + 5456y^{26} + 5984y^{25} + 6545y^{24} + 7140y^{23} + 7770y^{22} \\
& + 8436y^{21} + 9139y^{20} + 9880y^{19} + 10660y^{18} + 11480y^{17} + 12341y^{16} + 13244y^{15} \\
& + 14190y^{14} + 60xy^{12} + 15180y^{13} + 180xy^{11} + 16155y^{12} + 360xy^{10} + 17056y^{11} \\
& + 600xy^9 + 17824y^{10} + 900xy^8 + 18400y^9 + 1260xy^7 + 18725y^8 + 1680xy^6 + 18740y^7 \\
& + 30x^2y^4 + 2160xy^5 + 18386y^6 + 60x^2y^3 + 2640xy^4 + 17604y^5 + x^4 + 90x^2y^2 \\
& + 3480xy^3 + 16365y^4 + 56x^3 + 440x^2y + 4680xy^2 + 14280y^3 + 976x^2 + 6560xy \\
& + 10960y^2 + 5376x + 5376y.
\end{align*}

\begin{table}
\begin{center}
\begin{tabular}{|>{\columncolor{Sky}} c | c |}
  \hline			
  $P$ & $\#\mathrm{Cl}_{L_4}(P)$  \\
  \hline
  $C_3$ & $40$  \\
  $2C_3$ & $240$   \\
  $SL_2(\mathbb{F}_3)$ & $90$  \\
  $C_3 + SL_2(\mathbb{F}_3)$ & $360$  \\
  $L_3$ & $40$   \\
  \hline  
\end{tabular}
\end{center}
\caption{The Conjugacy Classes of the Parabolic Subgroups of $G_{32}$} \label{L4}
\bigskip
\bigskip
\end{table}

\begin{align*}
T_{L_4}(x,y) =\ & y^{36} + 4y^{35} + 10y^{34} + 20y^{33} + 35y^{32} + 56y^{31} + 84y^{30} + 120y^{29} + 165y^{28} + 220y^{27} + 286y^{26} \\
& + 364y^{25} + 455y^{24} + 560y^{23} + 680y^{22} + 816y^{21} + 969y^{20} + 1140y^{19} + 1330y^{18} + 1540y^{17} \\
& + 1771y^{16} + 2024y^{15} + 2300y^{14} + 2600y^{13} + 2925y^{12} + 3276y^{11} + 40xy^9 + 3654y^{10} \\
& + 120xy^8 + 4020y^9 + 240xy^7 + 4335y^8 + 400xy^6 + 4560y^7 + 600xy^5 + 4656y^6 + 840xy^4 \\
& + 4584y^5 + x^4 + 90x^2y^2 + 1120xy^3 + 4305y^4 + 36x^3 + 180x^2y + 1620xy^2 + 3780y^3 \\
& + 396x^2 + 1800xy + 2700y^2 + 1296x + 1296y.
\end{align*}

\section{The Primitive Complex Reflection Groups of Type $K$}  \label{SecRk5}

\noindent In this section are exposed the Tutte polynomials associated to the irreducible CRGs $K_5$, $K_6$. The calculations are done using the cardinalities of conjugacy classes in Table~\ref{K5} -- \ref{K6}, established by means of $\#\mathrm{Cl}_G(X) = \big[G : \mathrm{N}_G(X)\big]$ and \cite[Table~10 -- 11]{KrTa}. Note that the computing of $T_{K_6}(x,y)$ concludes the proof of Theorem~\ref{ThPri}.

\begin{table}
\begin{center}
\begin{tabular}{|>{\columncolor{Sky}} c | c || >{\columncolor{Sky}} c | c |}
  \hline			
  $P$ & $\#\mathrm{Cl}_{K_5}(P)$ & $P$ & $\#\mathrm{Cl}_{K_5}(P)$  \\
  \hline
  $\mathrm{Sym}(2)$  & $45$ &  $G(3,3,3)$ & $40$   \\
  $2\mathrm{Sym}(2)$  & $270$  &  $\mathrm{Sym}(2) + \mathrm{Sym}(4)$  & $540$    \\
  $\mathrm{Sym}(3)$  & $240$  &  $\mathrm{Sym}(5)$  & $216$   \\
  $\mathrm{Sym}(2) + \mathrm{Sym}(3)$  & $720$  &  $G(3,3,4)$  & $40$   \\
  $\mathrm{Sym}(4)$  & $540$  &  $G(2,2,4)$  & $45$   \\
  $3\mathrm{Sym}(2)$  & $270$  &   &  \\
  \hline  
\end{tabular}
\end{center}
\caption{The Conjugacy Classes of the Parabolic Subgroups of $G_{33}$} \label{K5}
\bigskip
\bigskip
\end{table}

\begin{align*}
T_{K_5}(x,y) =\ & y^{40} + 5y^{39} + 15y^{38} + 35y^{37} + 70y^{36} + 126y^{35} + 210y^{34} + 330y^{33} + 495y^{32} + 715y^{31} \\
& + 1001y^{30} + 1365y^{29} + 1820y^{28} + 2380y^{27} + 3060y^{26} + 3876y^{25} + 4845y^{24} + 5985y^{23} \\ & + 7315y^{22} + 8855y^{21} + 10626y^{20} + 12650y^{19} + 14950y^{18} + 17550y^{17} + 20475y^{16} \\
& + 40xy^{14} + 23751y^{15} + 160xy^{13} + 27365y^{14} + 400xy^{12} + 31265y^{13} + 800xy^{11} \\
& + 35360y^{12} + 1400xy^{10} + 39520y^{11} + 2240xy^9 + 43576y^{10} + 3405xy^8 + 47320y^9 \\
& + 40x^2y^6 + 4980xy^7 + 50460y^8 + 120x^2y^5 + 7186xy^6 + 52620y^7 + 240x^2y^4 + 10164xy^5 \\
& + 53164y^6 + x^5 + 940x^2y^3 + 14055xy^4 + 51276y^5 + 40x^4 + 240x^3y + 2220x^2y^2 \\
& + 18460xy^3 + 45960y^4 + 580x^3 + 4080x^2y + 20820xy^2 + 36040y^3 + 3600x^2 + 17856xy \\
& + 22320y^2 + 8064x + 8064y.
\end{align*}

\begin{table}
\begin{center}
\begin{tabular}{|>{\columncolor{Sky}} c | c || >{\columncolor{Sky}} c | c |}
  \hline			
  $P$ & $\#\mathrm{Cl}_{K_6}(P)$ & $P$ & $\#\mathrm{Cl}_{K_6}(P)$  \\
  \hline
  $\mathrm{Sym}(2)$  & $126$ &  $2\mathrm{Sym}(3)$ & $30240$  \\
  $2\mathrm{Sym}(2)$  & $2835$  &  $\mathrm{Sym}(2) + G(3,3,3)$  & $5040$   \\
  $\mathrm{Sym}(3)$  & $1680$  &  $G(2,2,4)$  & $2835$  \\
  $\mathrm{Sym}(2) + \mathrm{Sym}(3)$  & $30240$  &  $\mathrm{Sym}(2) + \mathrm{Sym}(5)$  & $27216$  \\
  $\mathrm{Sym}(4)$  & $11340$  &  $\mathrm{Sym}(3) + \mathrm{Sym}(4)$  & $45360$   \\
  $3\mathrm{Sym}(2)$  & $11340$  &  $\mathrm{Sym}(2) + G(3,3,4)$  & $5040$  \\
  $G(3,3,3)$  & $560$  &  $\mathrm{Sym}(6)$  & $9072$  \\
  $\mathrm{Sym}(2) + \mathrm{Sym}(4)$  & $68040$  &  $\mathrm{Sym}(6)'$  & $9072$  \\
  $G(3,3,4)$  & $1680$  &  $G(2,2,5)$  & $3402$  \\
  $\mathrm{Sym}(5)$  & $27216$  &  $G(3,3,5)$  & $672$  \\
  $2\mathrm{Sym}(2) + \mathrm{Sym}(3)$  & $45360$  &  $K_5$  & $126$   \\
  \hline  
\end{tabular}
\end{center}
\caption{The Conjugacy Classes of the Parabolic Subgroups of $G_{34}$} \label{K6}
\end{table}

\clearpage

\begin{align*}
T_{K_6}(x,y) =\ & y^{120} + 6y^{119} + 21y^{118} + 56y^{117} + 126y^{116} + 252y^{115} + 462y^{114} + 792y^{113} + 1287y^{112} \\
& + 2002y^{111} + 3003y^{110} + 4368y^{109} + 6188y^{108} + 8568y^{107} + 11628y^{106} + 15504y^{105} \\
& + 20349y^{104} + 26334y^{103} + 33649y^{102} + 42504y^{101} + 53130y^{100} + 65780y^{99} + 80730y^{98}\\
& + 98280y^{97} + 118755y^{96} + 142506y^{95} + 169911y^{94} + 201376y^{93} + 237336y^{92} \\
& + 278256y^{91} + 324632y^{90} + 376992y^{89} + 435897y^{88} + 501942y^{87} + 575757y^{86} \\
& + 658008y^{85} + 749398y^{84} + 850668y^{83} + 962598y^{82} + 1086008y^{81} + 1221759y^{80} \\
& + 1370754y^{79} + 1533939y^{78} + 1712304y^{77} + 1906884y^{76} + 2118760y^{75} + 2349060y^{74} \\
& + 2598960y^{73} + 2869685y^{72} + 3162510y^{71} + 3478761y^{70} + 3819816y^{69} + 4187106y^{68} \\
& + 4582116y^{67} + 5006386y^{66} + 5461512y^{65} + 5949147y^{64} + 6471002y^{63} + 7028847y^{62} \\
& + 7624512y^{61} + 8259888y^{60} + 8936928y^{59} + 9657648y^{58} + 10424128y^{57} + 11238513y^{56} \\
& + 12103014y^{55} + 13019909y^{54} + 13991544y^{53} + 15020334y^{52} + 16108764y^{51} \\
& + 17259390y^{50} + 18474840y^{49} + 19757815y^{48} + 21111090y^{47} + 22537515y^{46} \\
& + 24040016y^{45} + 25621596y^{44} + 27285336y^{43} + 29034396y^{42} + 126xy^{40} + 30872016y^{41} \\
& + 630xy^{39} + 32801391y^{40} + 1890xy^{38} + 34825546y^{39} + 4410xy^{37} + 36947211y^{38} \\
& + 8820xy^{36} + 39168696y^{37} + 15876xy^{35} + 41491766y^{36} + 26460xy^{34} + 43917516y^{35} \\
& + 41580xy^{33} + 46446246y^{34} + 62370xy^{32} + 49077336y^{33} + 90090xy^{31} + 51809121y^{32} \\
& + 126126xy^{30} + 54638766y^{31} + 171990xy^{29} + 57562141y^{30} + 229320xy^{28} + 60573696y^{29} \\
& + 299880xy^{27} + 63666336y^{28} + 385560xy^{26} + 66831296y^{27} + 489048xy^{25} + 70058016y^{26} \\
& + 613830xy^{24} + 73333344y^{25} + 764190xy^{23} + 76640739y^{24} + 945210xy^{22} + 79959474y^{23} \\
& + 1162770xy^{21} + 83263839y^{22} + 1423548xy^{20} + 86522344y^{21} + 1735020xy^{19} \\
& + 89696922y^{20} + 2105460xy^{18} + 92742132y^{19} + 2543940xy^{17} + 95604362y^{18} \\
& + 3060330xy^{16} + 98221032y^{17} + 1680x^2y^{14} + 3668700xy^{15} + 100519797y^{16} \\
& + 6720x^2y^{13} + 4389000xy^{14} + 102414348y^{15} + 16800x^2y^{12} + 5236980xy^{13} \\
& + 103796853y^{14} + 33600x^2y^{11} + 6224190xy^{12} + 104540478y^{13} + 58800x^2y^{10} \\
& + 7357980xy^{11} + 104501908y^{12} + 94080x^2y^9 + 8659644xy^{10} + 103523868y^{11} \\
& + 143955x^2y^8 + 10164420xy^9 + 101419500y^{10} + 560x^3y^6 + 212940x^2y^7 + 11915820xy^8 \\
& + 97956740y^9 + 1680x^3y^5 + 336126x^2y^6 + 13948620xy^7 + 92845530y^8 + 3360x^3y^4 \\
& + 538524x^2y^5 + 16253244xy^6 + 85742040y^7 + x^6 + 16940x^3y^3 + 845145x^2y^4 \\
& + 18547956xy^5 + 76257370y^6 + 120x^5 + 1680x^4y + 42420x^3y^2 + 1315020x^2y^3 \\
& + 20364540xy^4 + 64204140y^5 + 5580x^4 + 103320x^3y + 1787940x^2y^2 + 20946240xy^3 \\
& + 49757400y^4 + 125280x^3 + 2068416x^2y + 19005840xy^2 + 33672240y^3 + 1353024x^2 \\
& + 13716864xy + 17962560y^2 + 5598720x + 5598720y.
\end{align*}

\bibliographystyle{abbrvnat}

\end{document}